\newtheorem{theorem}{Theorem}
\newtheorem{corollary}{Corollary}
\newtheorem{definition}{Definition}
\newtheorem{remark}{Remark}
\newtheorem{proposition}{Proposition}
\definecolor{kthblue}{rgb}{0.098, 0.329, 0.651}
\definecolor{newgreen}{RGB}{38, 115, 77}
\definecolor{newpurple}{RGB}{102, 0, 204}
\definecolor{newred}{RGB}{153, 0, 0}
\definecolor{newbrown}{RGB}{115, 77, 38}
\definecolor{neworange}{RGB}{255, 153, 0}
\title{
Symmetrizable systems}
\author{Hamed Taghavian and Jens Sjölund}
\begin{document}
\maketitle


\begin{abstract}
Transforming an asymmetric system into a symmetric system makes it possible to exploit the simplifying properties of symmetry in control problems. We define and characterize the family of symmetrizable systems, which can be transformed into symmetric systems by a linear transformation of their inputs and outputs. In the special case of complete symmetry, the set of symmetrizable systems is convex and verifiable by a semidefinite program. We show that a Khatri-Rao rank needs to be satisfied for a system to be symmetrizable and conclude that linear systems are generically neither symmetric nor symmetrizable.
\end{abstract}

\section{Introduction}
Symmetric systems, also known as reciprocal systems in the literature, are control systems that exhibit input-output symmetry in their transfer function matrices $G(s)\in\mathbb{C}^{m\times m}$ as follows
\begin{equation}\label{eqn:exsym}
    \Sigma_e G^\top(s)=G(s) \Sigma_e,
\end{equation}
where $\Sigma_e$ is a constant diagonal matrix whose diagonal elements are either $1$ or $-1$. Relation (\ref{eqn:exsym}) implies that a system is symmetric, if and only if, either its transfer function is a symmetric matrix or it becomes one by multiplying one or more of its columns by $-1$.


Symmetric systems are found in diverse application areas, such as electrical circuits, chemical reactors, mechanical systems, and power networks~\cite{Hughes,mohammadpour2010efficient,pates2022passive}. These systems have properties that simplify many control problems. For example, certain $H_{2}$ and $H_{\infty}$ optimal control problems have well-known analytical
solutions when applied to symmetric systems~\cite{pates2022passive,pates2019optimal}, the optimal linear-quadratic regulator of symmetric systems can be obtained by iterative learning control with no bias~\cite{taghavian2024optimal}, and estimated by a single trajectory of the system~\cite{drummond2024learning}.

In this paper, we extend the family of symmetric systems, by introducing symmetrizable systems. These systems may not be symmetric, but after a linear transformation of their inputs and outputs as
\begin{equation}\label{eqn:symmable}
    H(s)=K^{-1}G(s)K,
\end{equation}
they become symmetric, where $K\in\mathbb{R}^{m\times m}$ is a constant invertible matrix, called the symmetrizing gain. The simplifying properties of symmetric systems carry over to symmetrizable systems. This is realized by first transforming the asymmetric system $G(s)$ to a symmetric system $H(s)$ as (\ref{eqn:symmable}), solving a control problem for $H(s)$, and converting the result back for $G(s)$. For example, consider the static output-feedback controller $u_s(t)=K_s\, y_s(t)$ designed for the symmetric system $H(s)$ with input $u_s$ and output $y_s$. This control law is equivalent to applying the following static output feedback to $G(s)$:
\begin{equation}\label{eqn:u=KKsKy}
u(t)=KK_sK^{-1}y(t),
\end{equation}
where $u$ and $y$ are the input and output of the asymmetric system $G(s)$, respectively.

We study the symmetrizing transformation (\ref{eqn:symmable}) and provide the necessary and sufficient conditions for a given system $G(s)$ to admit such a transformation. Further, we propose a method to find a symmetrizing gain $K$ when one exists. Finally, we use the transformation~(\ref{eqn:symmable}) to obtain the analytic solution of a linear-quadratic optimal control problem for symmetrizable systems.

The preliminaries are given in Section~\ref{sec:pre}, and a background on symmetric systems is provided in Section~\ref{sec:sym}. Symmetrizable systems are introduced in Section~\ref{sec:symmable} and further discussed in Section~\ref{sec:symmability}. We give some examples to demonstrate the results in Section~\ref{sec:ex} and provide the concluding remarks in Section~\ref{sec:conc}.

\section{Notation and preliminaries}\label{sec:pre}
We denote the algebraic multiplicity of an eigenvalue $\lambda$ by $\textnormal{mul}(\lambda)$, interpret $\vert M\vert$ and $\textnormal{sgn}(M)$ element-wise, and use $\textnormal{ker}(M)$ and $\textnormal{col}(M)$ to denote the kernel and column spaces of $M$. The direct sum and the Kronecker product of matrices are denoted by $\oplus$ and $\otimes$, respectively. The (column-wise) Khatri-Rao product of two matrices $M$ and $N$ partitioned as
\begin{align*}
    M&=\begin{bmatrix}
        M_1 & M_2 & \dots & M_r
        \end{bmatrix},\quad M_{j}\in\mathbb{R}^{m\times t_{j}}\nonumber\\
        N&=\begin{bmatrix}
            N_1 & N_2 & \dots & N_r
        \end{bmatrix},\quad N_{j}\in\mathbb{R}^{n\times s_{j}}
\end{align*}
is defined by
$$
M\star N:=\begin{bmatrix}
            M_1\otimes N_1 & M_2\otimes N_2 & \dots & M_r\otimes N_r
        \end{bmatrix}.
$$
Let $n_+(S)$, $n_-(S)$, and $n_0(S)$ be the number of positive, negative, and zero eigenvalues of the symmetric matrix $S\in\mathbb{R}^{q\times q}$ respectively. Then the signature of $S$ is defined by the scalar
$$i(S):=n_+(S)-n_-(S).$$
A diagonal matrix $S\in\mathbb{R}^{q\times q}$ is called a \emph{signature} matrix if its diagonal elements are either $-1$ or $+1$. The two symmetric matrices $S_1,S_2\in\mathbb{R}^{q\times q}$ are congruent, denoted by $S_1\sim S_2$, if there is a non-singular matrix $M$ such that $S_2=M^\top S_1M$. The centralizer of the square matrix $M$, denoted by $\textnormal{Cent}(M)$, is the set of matrices that commute with $M$. The following proposition shows that the centralizer of the direct sum of two matrices $M_1$ and $M_2$ can be obtained by the direct sum of their centralizer elements.


\begin{proposition}[\cite{LMU}]\label{prop:cent}
    Let $M_1$ and $M_2$ be two square matrices with disjoint spectra (they have no common eigenvalues). Then $\textnormal{Cent}(M_1\oplus M_2)=\lbrace U_1 \oplus U_2 \vert\, U_1\in\textnormal{Cent}(M_1),U_2\in\textnormal{Cent}(M_2)\rbrace$.
\end{proposition}

\section{Symmetric systems}\label{sec:sym}
A linear system with transfer function $G(s)\in\mathbb{C}^{m\times m}$ is said to be (externally) symmetric if there is a signature matrix $\Sigma_e$ such that (\ref{eqn:exsym}) holds~\cite{willems1976sym}. The state-space
\begin{align}\label{eqn:state_space}
\begin{array}{rcl}
    \dot{x}(t)&=&Ax(t)+Bu(t) \\
    y(t)&=&Cx(t)+Du(t) 
\end{array},&\quad t\geq 0
\end{align}
is called (internally) symmetric if its associated short-hand system matrix
\begin{equation}\label{eqn:P}
P:=\begin{bmatrix}
    A & B \\
    C & D
\end{bmatrix}
\end{equation}
satisfies $\Sigma P = P^\top\Sigma$ for some signature matrix
\begin{equation}\label{eqn:sigma}
    \Sigma=\begin{bmatrix}
    -\Sigma_i & 0 \\
    0 & \Sigma_e
\end{bmatrix}.
\end{equation}
We assume the state-space (\ref{eqn:state_space}) represented by $P$ is minimal throughout the paper. In (\ref{eqn:sigma}), we call $\Sigma$ the signature matrix of the system or system inertia, $i(\Sigma)$ the system signature, $\Sigma_i$ the \emph{internal} signature matrix, and $\Sigma_e$ the \emph{external} signature matrix of the system. Symmetric systems with the external and internal signature matrices $\Sigma_e=I$ and $\Sigma_i=-I$ are called \emph{completely} symmetric~\cite{willems1976sym}. Relaxation systems are a well-known subset of these systems, in which $A\preceq 0$ and $D\succeq 0$ hold in their completely symmetric realizations.



\begin{proposition}[\cite{willems1976sym}]\label{prop:exsym=intsym}
    A linear system is symmetric if and only if it admits a symmetric minimal state-space realization.
\end{proposition}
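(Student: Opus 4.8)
The plan is to prove the two implications separately. The forward implication (a symmetric minimal realization forces external symmetry of $G$) is a direct computation: expanding $\Sigma P = P^\top\Sigma$ with $\Sigma = \mathrm{diag}(-\Sigma_i,\Sigma_e)$ into its blocks yields $\Sigma_i A = A^\top\Sigma_i$, $\Sigma_e C = -B^\top\Sigma_i$, and $\Sigma_e D = D^\top\Sigma_e$. Substituting these into $\Sigma_e G^\top(s)\Sigma_e = \Sigma_e B^\top(sI-A^\top)^{-1}C^\top\Sigma_e + \Sigma_e D^\top\Sigma_e$ and using $\Sigma_i^2 = I$, $\Sigma_e^2 = I$ (which give $A^\top = \Sigma_i A\Sigma_i$, $\Sigma_e B^\top = -C\Sigma_i$, $C^\top\Sigma_e = -\Sigma_i B$, $D^\top = \Sigma_e D\Sigma_e$) collapses the expression to $C(sI-A)^{-1}B + D = G(s)$, which is (\ref{eqn:exsym}). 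I expect no difficulty here; minimality is not even needed for this direction.

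For the converse, start from any minimal realization $(A,B,C,D)$ of $G(s)$. The transpose transfer function $G^\top(s)$ then has the minimal realization $(A^\top, C^\top, B^\top, D^\top)$; but external symmetry, written as $G^\top(s) = \Sigma_e G(s)\Sigma_e$, also exhibits the minimal realization $(A, B\Sigma_e, \Sigma_e C, \Sigma_e D\Sigma_e)$ (minimal because $\Sigma_e$ is invertible). By the uniqueness of the state-coordinate change relating two minimal realizations of the same transfer function, there is a unique invertible $T$ with $A = TA^\top T^{-1}$, $B\Sigma_e = TC^\top$, and $\Sigma_e C = B^\top T^{-1}$.

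The crucial step is to show $T$ can be taken symmetric. Transposing these three identities and using $\Sigma_e^\top = \Sigma_e$, $\Sigma_e^2 = I$ shows that $T^\top$ satisfies the exact same three identities, so uniqueness forces $T = T^\top$. Since $-T^{-1}$ is then symmetric and invertible, Sylvester's law of inertia furnishes an invertible $M$ and a signature matrix $\Sigma_i$ (with $n_-(T)$ diagonal entries equal to $+1$ and $n_+(T)$ equal to $-1$) such that $M^\top\Sigma_i M = -T^{-1}$. Changing state coordinates by $M$ gives the still-minimal realization $\tilde P$ with $(\tilde A,\tilde B,\tilde C,\tilde D) = (MAM^{-1}, MB, CM^{-1}, D)$, and a direct substitution — using $AT = TA^\top$, $\Sigma_e C = B^\top T^{-1}$, $T = T^\top$, $M^\top\Sigma_i M = -T^{-1}$, together with $D^\top = \Sigma_e D\Sigma_e$ (the $s\to\infty$ limit of external symmetry) — verifies $\Sigma\tilde P = \tilde P^\top\Sigma$ with $\Sigma = \mathrm{diag}(-\Sigma_i,\Sigma_e)$. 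Hence $\tilde P$ is an internally symmetric minimal realization.

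The main obstacle is the symmetry of $T$, which is where the uniqueness of minimal realizations does the real work; a secondary subtlety is sign bookkeeping, since the congruence must be taken to $-T^{-1}$ rather than $T^{-1}$ to match the minus sign in $\Sigma$, and this choice is precisely what pins down the internal signature matrix $\Sigma_i$. The remaining manipulations are routine matrix algebra.
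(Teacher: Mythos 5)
Your argument is correct. Note, however, that the paper does not prove Proposition~\ref{prop:exsym=intsym} at all -- it is quoted from \cite{willems1976sym} -- so there is no in-paper proof to compare against; your route (block-expanding $\Sigma P=P^\top\Sigma$ for the easy direction, then for the converse invoking the state-space isomorphism theorem to get a unique intertwining $T$ between the minimal realizations of $G^\top$ and $\Sigma_e G\Sigma_e$, forcing $T=T^\top$ by uniqueness, and extracting $\Sigma_i$ via Sylvester congruence of $-T^{-1}$ followed by the coordinate change $M$) is precisely the classical argument underlying the cited result, with the inertia bookkeeping done correctly.
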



\section{Symmetrizable systems}\label{sec:symmable}
We introduce the new family of symmetrizable systems as follows.
\begin{definition}
A linear system with transfer function $G(s)\in\mathbb{C}^{m\times m}$ is called \emph{symmetrizable} if there exists a static gain $K\in\mathbb{R}^{m\times m}$ such that the transformed system (\ref{eqn:symmable})
is symmetric.    
\end{definition}
The transformation (\ref{eqn:symmable}) induces the following equivalence relation on $m\times m$ transfer functions: $G\stackrel{\mathclap{\footnotesize\mbox{s}}}{\sim}H$ is true if there is some non-singular $K\in\mathbb{R}^{m\times m}$ such that (\ref{eqn:symmable}) holds. Hence, if a member of a class is symmetric, all members of that class are symmetrizable. If no member of a class is symmetric, then none of the members are symmetrizable. The following theorem provides the conditions that characterize all symmetrizable systems.

\begin{theorem}\label{lem:symmable}
    The system represented by (\ref{eqn:P}) is symmetrizable if and only if the following equations have a non-singular solution for $Q$: 
    \begin{align}
    PQ&=QP^\top,\,Q=Q^\top \label{condition:P}\\
    Q_{12}&=0. \label{condition:Q=0}
    \end{align}
    where $Q_{12}=[I_n\;\, 0]\,Q\,[0\;\, I_m]^\top$ is an off-diagonal block of $Q$.
\end{theorem}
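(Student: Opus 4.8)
The plan is to convert the question ``is $G(s)$ symmetrizable?'' into a linear-algebraic question about the realization matrix $P$, using Proposition~\ref{prop:exsym=intsym} as the bridge between external and internal symmetry. First I would show the ``only if'' direction. Suppose $H(s)=K^{-1}G(s)K$ is symmetric. Since $G(s)$ has the minimal realization $P=\begin{bmatrix}A & B\\ C & D\end{bmatrix}$, a minimal realization of $H(s)$ is obtained by applying the input-output transformation, namely $\tilde P=\begin{bmatrix}A & BK\\ K^{-1}C & K^{-1}DK\end{bmatrix}$, which can be written as $\tilde P = T^{-1}PT$ with $T = I_n\oplus K$ (block-diagonal). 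By Proposition~\ref{prop:exsym=intsym}, $H(s)$ being symmetric means some minimal realization of it is internally symmetric; since all minimal realizations are related by state-coordinate changes, there is an invertible $R$ (acting only on the state) such that $\hat P = (R\oplus I_m)\tilde P (R^{-1}\oplus I_m)$ satisfies $\Sigma\hat P=\hat P^\top\Sigma$ for a signature matrix $\Sigma=(-\Sigma_i)\oplus\Sigma_e$. Unwinding, with $S := R\oplus I_m$ and $T = I_n\oplus K$ and $M := TS$ (still block-diagonal, of the form $(\,\cdot\,)\oplus K$), one gets $\Sigma M^{-1}P M = M^\top P^\top M^{-\top}\Sigma$, i.e. $(M\Sigma M^\top) P^\top = P (M\Sigma M^\top)$ after rearranging. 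Setting $Q := M\Sigma M^\top$ gives a symmetric, non-singular $Q$ with $PQ=QP^\top$; and because $M=(R)\oplus K$ is block-diagonal and $\Sigma$ is block-diagonal, $Q$ is block-diagonal too, so $Q_{12}=0$. This yields (\ref{condition:P})--(\ref{condition:Q=0}).

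For the ``if'' direction I would run the argument in reverse. Given a non-singular symmetric $Q$ with $PQ=QP^\top$ and $Q_{12}=0$, write $Q = Q_i\oplus Q_e$ with $Q_i\in\mathbb{R}^{n\times n}$, $Q_e\in\mathbb{R}^{m\times m}$, both symmetric and non-singular. By Sylvester's law of inertia, factor each block through its signature: there are non-singular $R$ and $K$ with $Q_i = -R\Sigma_i R^\top$ and $Q_e = K\Sigma_e K^\top$ for signature matrices $\Sigma_i,\Sigma_e$ (the sign conventions chosen to match (\ref{eqn:sigma})). Then $M := R\oplus K$ satisfies $Q = M\Sigma M^\top$ with $\Sigma = (-\Sigma_i)\oplus\Sigma_e$. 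Substituting back into $PQ=QP^\top$ and conjugating, the matrix $\hat P := M^{-1}PM$ satisfies $\Sigma\hat P = \hat P^\top\Sigma$, so $\hat P$ is an internally symmetric realization; it is minimal because it is similar to $P$. Since $M$ has the form $R\oplus K$, the realization $\hat P$ is precisely the one obtained from $G(s)$ by the state change $R$ and the input-output transformation $K$, so its transfer function is $K^{-1}G(s)K =: H(s)$, which is therefore symmetric by Proposition~\ref{prop:exsym=intsym}. Hence $G(s)$ is symmetrizable with symmetrizing gain $K$.

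A few bookkeeping points need care, which I expect to be the main obstacle rather than any deep difficulty. One must track precisely how the input-output transformation $H(s)=K^{-1}G(s)K$ acts on the system matrix $P$ and confirm that it is exactly conjugation by $I_n\oplus K$ (so that the combined state-plus-IO transformation is genuinely block-diagonal of the form $R\oplus K$); this is what forces the $Q_{12}=0$ condition and is the crux of why the theorem has that extra block constraint. One also has to be careful that $K$ is real — this uses that $Q_e$ is a real symmetric matrix, so Sylvester's law gives a real congruence. Finally, the appeal to Proposition~\ref{prop:exsym=intsym} in the ``only if'' direction must account for the fact that the internally symmetric realization guaranteed by that proposition need not be the particular $\tilde P$ we wrote down, only some minimal realization of $H(s)$; bridging this gap is exactly the insertion of the state-similarity $R$, and one should note that $R$ may be absorbed harmlessly since it acts trivially on the external block. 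I do not anticipate needing Proposition~\ref{prop:cent} for this particular statement; that tool is presumably for the later structural/uniqueness results about symmetrizing gains.
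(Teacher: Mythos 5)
Your proposal is correct and follows essentially the same route as the paper: reduce to internal symmetry via Proposition~\ref{prop:exsym=intsym}, parametrize the minimal realizations of $K^{-1}G(s)K$ by block-diagonal similarities $T\oplus K$, set $Q=M\Sigma M^\top$ to obtain (\ref{condition:P})--(\ref{condition:Q=0}), and invoke Sylvester's law of inertia for the converse factorization of the diagonal blocks. The only blemish is the inconsequential bookkeeping slip where $M:=TS$ should be $TS^{-1}$ (equivalently, replace $R$ by $R^{-1}$), which does not affect the argument.
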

\begin{proof}
According to Proposition~\ref{prop:exsym=intsym}, system $G(s)$ is symmetrizable if and only if $H(s)=K^{-1}G(s)K$ admits a symmetric minimal state-space realization for some $K\in\mathbb{R}^{m\times m}$. Assuming $P$ in (\ref{eqn:P}) is a system matrix of $G(s)$, all the minimal realizations of $H(s)$ are represented by the system matrix
\begin{align*}
    \begin{bmatrix}
        T^{-1}AT & T^{-1}BK\\
        K^{-1}CT & K^{-1}DK
    \end{bmatrix}=L^{-1}PL,
\end{align*}
where
\begin{equation}\label{eqn:L}
L:=\begin{bmatrix}
    T & 0\\
    0 & K
\end{bmatrix}
\end{equation}
is non-singular. Therefore, system $G(s)$ is symmetrizable if and only if there is some block-diagonal non-singular matrix $L$ as (\ref{eqn:L}) and some signature matrix $\Sigma$ as (\ref{eqn:sigma}) such that
\begin{equation}\label{eqn:sigmaLpL}
    \Sigma L^{-1}PL=L^\top P^\top L^{-\top}\Sigma.
\end{equation}
Define $Q:=L\Sigma L^\top$ and partition it as
$$
Q=\begin{bmatrix}
    Q_{11}& Q_{12}\\
    Q_{12}^\top& Q_{22}
\end{bmatrix},
$$
where $Q_{11}\in\mathbb{R}^{n\times n}$ and $Q_{22}\in\mathbb{R}^{m\times m}$. Then equation (\ref{eqn:sigmaLpL}) can be written as (\ref{condition:P})-(\ref{condition:Q=0}) where
\begin{align}
    Q_{11}&=-T\Sigma_i T^\top, \label{condition:Q=T}\\
    Q_{22}&=K\Sigma_e K^\top. \label{condition:Q=K}
\end{align}
If the solution $Q$ to equations (\ref{condition:P})-(\ref{condition:Q=0}) is singular, at least one of its sub-matrices $Q_{11}$ and $Q_{22}$ is singular, because $Q$ is block-diagonal by (\ref{condition:Q=0}). In this case, both $T$ and $K$ cannot be non-singular at the same time in (\ref{condition:Q=T})-(\ref{condition:Q=K}). However, both $T$ and $K$ must be non-singular for $L$ to be non-singular in (\ref{eqn:L}).
In contrast, if the solution $Q$ to equations (\ref{condition:P})-(\ref{condition:Q=0}) is non-singular, one can always find non-singular $T$ and $K$ satisfying (\ref{condition:Q=T})-(\ref{condition:Q=K}). To see this, note that since both $Q_{11}$ and $Q_{22}$ are non-singular in this case, there are signature matrices $\Sigma_i$, $\Sigma_e$ such that
\begin{equation}\label{eqn:Q_sim_Sigma}
Q_{11}\sim -\Sigma_i \textnormal{ and } Q_{22}\sim \Sigma_e.
\end{equation}
By Sylvester's law of inertia, equations (\ref{condition:Q=T}) and (\ref{condition:Q=K}) have non-singular solutions for $T$ and $K$ if and only if $\Sigma_i$, $\Sigma_e$ are chosen as (\ref{eqn:Q_sim_Sigma}). Therefore, system $G(s)$ is symmetrizable if and only if (\ref{condition:P})-(\ref{condition:Q=0}) have a non-singular solution for $Q$.
\end{proof}


\begin{remark}
    Symmetrizable systems include symmetric systems as a subset. This follows from choosing $K=I$ in (\ref{eqn:symmable}).
\end{remark}

\begin{remark}
    Varying $L$ (\ref{eqn:L}) does not change the class under the equivalence relation $\stackrel{\mathclap{\footnotesize\mbox{s}}}{\sim}$. Therefore, symmetrizability is independent of realization.
\end{remark}

When the system represented by the system matrix (\ref{eqn:P}) is symmetrizable, one can find a symmetrizing gain $K$ and a symmetric realization $T$ based on the solution $Q$ of the equations (\ref{condition:P})-(\ref{condition:Q=0}). Such matrices $T$ and $K$ are not unique, with one instance being
\begin{align}\label{eqn:1sol}
T=F_1 \vert D_1\vert^{1/2}, \;
K=F_2 \vert D_2\vert^{1/2}  
\end{align}
where $D_1$ ($D_2$) is the diagonal matrix of eigenvalues of $Q_{11}$ ($Q_{22}$) associated with an orthonormal matrix of eigenvectors $F_1$ ($F_2$) sorted such that $\textnormal{sgn}(D_1)=-\Sigma_i$ ($\textnormal{sgn}(D_2)=\Sigma_e$).

The internal and external signature matrices $\Sigma_i$ and $\Sigma_e$ of the symmetrized system are determined by the signatures of the submatrices $Q_{11}$ and $Q_{22}$, respectively, via (\ref{eqn:Q_sim_Sigma}).
In particular, when we are only interested in complete symmetry ($\Sigma_i=-I$ and $\Sigma_e=I$), we can enforce $Q\succ 0$ in (\ref{condition:P}), which makes the symmetrizability conditions convex.
Therefore, system (\ref{eqn:state_space}) can be symmetrized into a completely symmetric system if and only if the following semidefinite program is feasible
$$
    \begin{array}[c]{rl}
    \text{find} & Q\in \mathbb{R}^{(n+m)\times(n+m)}\\
    \mbox{subject to}
    & Q\succ 0\\
    & PQ=QP^\top \\
    & [I_n\;\, 0]\,Q\,[0\;\, I_m]^\top=0.
    \end{array}
$$
However, in the general case when $\Sigma_i$ and $\Sigma_e$ are not fixed, the condition on non-singularity of $Q$ in Theorem~\ref{lem:symmable} is non-convex. Even specifying the system inertia $\Sigma$ (and hence the sign of the eigenvalues of $Q$) a priori does not resolve this issue; except for the positive and negative definite cones, all the other cones in which $Q$ has fixed eigenvalue signs are non-convex. Therefore, it can be difficult to verify the conditions in Theorem~\ref{lem:symmable} numerically in the general case. Alternative conditions are given in the next section.


\section{Symmetrizability}\label{sec:symmability}

In this section, we obtain conditions that are more computationally amenable than Theorem~\ref{lem:symmable}. These conditions are either necessary (Theorem~\ref{thm:KR}) or both necessary and sufficient (Corollary~\ref{cor:realdistinct}) for a linear system to be symmetrizable. We also determine the possible inertia of the symmetrized system based on a minimal realization.

First, we use the symmetric decomposition to show that equation (\ref{condition:P}) always has a non-singular solution. According to this decomposition, for every square matrix $P$ there are two real symmetric matrices $Q$ and $N$ such that~\cite{taussky}\looseness=-1
\begin{equation}\label{eqn:P=QN}
P=QN,    
\end{equation}
where $Q$ is non-singular. Since $N=Q^{-1}P$ is symmetric, one has $Q^{-1}P=P^\top Q^{-1}$, where pre- and post-multiplying $Q$ recovers (\ref{condition:P}). Therefore, $Q$ is a non-singular solution of (\ref{condition:P}) if and only if it is the left non-singular factor in a symmetric decomposition of $P$ (which always exists). All these factors can be found analytically using the following proposition.



\begin{proposition}[\cite{uh1974}]\label{prop:S}
    Let $J=V^{-1}PV=\bigoplus_{i=1}^{b}J_i$ be the real Jordan normal form of $P$ and let $E_i$ be a matrix of the same size as the Jordan block $J_i$ and of the form
    \begin{equation}\label{eqn:matrix_Ei}
    E_i=\begin{bmatrix}
        0& & 1\\
         &.^{.^.} &\\
         1 & & 0
    \end{bmatrix}.
    \end{equation}
    All possible left factors in symmetric decompositions (\ref{eqn:P=QN}) of $P$ are given by
\begin{equation}\label{eqn:Q}        Q=VU\bigl(\bigoplus_{i=1}^{b}\sigma_iE_i\bigr) U^\top V^\top,
\end{equation}
    where $\sigma_i\in\lbrace -1,+1\rbrace$ and $U\in \textnormal{Cent}(J)$ is non-singular.
\end{proposition}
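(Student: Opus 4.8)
The plan is to translate the statement into a classification problem inside the algebra $\mathcal{A}:=\textnormal{Cent}(J)$. As observed in the text, the admissible left factors $Q$ of symmetric decompositions of $P$ are exactly the non-singular symmetric solutions of $PQ=QP^{\top}$. Writing $P=VJV^{-1}$ and $\widetilde{Q}:=V^{-1}QV^{-\top}$, this is equivalent to describing all non-singular symmetric $\widetilde{Q}$ with $J\widetilde{Q}=\widetilde{Q}J^{\top}$, with $Q=V\widetilde{Q}V^{\top}$ at the end. I would then bring in $E:=\bigoplus_{i=1}^{b}E_{i}$, where $E_{i}$ is the reversal-type matrix in \eqref{eqn:matrix_Ei} (modified in the obvious way for the real blocks of non-real eigenvalues) chosen so that $E_{i}J_{i}E_{i}=J_{i}^{\top}$; thus $E=E^{\top}$, $E^{2}=I$ and $EJE=J^{\top}$.

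Substituting $J^{\top}=EJE$ into $J\widetilde{Q}=\widetilde{Q}J^{\top}$ and post-multiplying by $E$ shows that $W:=\widetilde{Q}E$ commutes with $J$, i.e.\ $W\in\mathcal{A}$, and $W$ is non-singular iff $\widetilde{Q}$ is; conversely $\widetilde{Q}=WE$. A short computation gives that $\widetilde{Q}=\widetilde{Q}^{\top}$ is equivalent to $W=EW^{\top}E$, i.e.\ to $W$ being fixed by the involution $W\mapsto W^{\dagger}:=EW^{\top}E$ of $\mathcal{A}$ (indeed $(UW)^{\dagger}=W^{\dagger}U^{\dagger}$ and $(W^{\dagger})^{\dagger}=W$ because $E^{2}=I$). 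Writing $n_{i}$ for the size of $J_{i}$, note that $D_{\sigma}:=\bigoplus_{i}\sigma_{i}I_{n_{i}}$ lies in $\mathcal{A}$, is $\dagger$-self-adjoint, is a unit, and satisfies $D_{\sigma}E=\bigoplus_{i}\sigma_{i}E_{i}$. Hence the whole statement reduces to the claim that \emph{every $\dagger$-self-adjoint unit $W$ of $\mathcal{A}$ is $\dagger$-congruent to some $D_{\sigma}$}, i.e.\ $W=UD_{\sigma}U^{\dagger}$ for a non-singular $U\in\mathcal{A}$ and signs $\sigma_{i}\in\{-1,+1\}$: then $\widetilde{Q}=WE=UD_{\sigma}U^{\dagger}E=UD_{\sigma}E\,U^{\top}=U\bigl(\bigoplus_{i}\sigma_{i}E_{i}\bigr)U^{\top}$, which is \eqref{eqn:Q} after conjugating back by $V$.

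To prove the claim I would first group the Jordan blocks by eigenvalue (a real eigenvalue, or a conjugate pair). Blocks in different groups have disjoint spectra, so by Proposition~\ref{prop:cent} the algebra $\mathcal{A}$, the involution $\dagger$ (which respects the grouping because $E$ does), and the matrices $D_{\sigma}$ all decompose as direct sums over groups; it therefore suffices to treat one eigenvalue group. There $\mathcal{A}$ is a finite-dimensional real algebra with involution whose Jacobson radical $\textnormal{rad}\,\mathcal{A}$ is nilpotent and whose quotient $\mathcal{A}/\textnormal{rad}\,\mathcal{A}$ is a product of matrix algebras over $\mathbb{R}$, $\mathbb{C}$ or $\mathbb{H}$ with induced involution. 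For such semisimple $\ast$-algebras the self-adjoint units are classified up to $\dagger$-congruence by a Sylvester-type signature whose standard representatives are the images of the $D_{\sigma}$; one then lifts a congruence back through $\textnormal{rad}\,\mathcal{A}$ by a Newton/Hensel iteration, using that $I+\nu$ with $\nu\in\textnormal{rad}\,\mathcal{A}$ self-adjoint has the self-adjoint square root given by the terminating binomial series. For a single Jordan block this step is elementary: $\mathcal{A}\cong\mathbb{R}[t]/(t^{n_{i}})$ is commutative, every element is $\dagger$-self-adjoint, and a unit $w_{0}+w_{1}t+\cdots$ satisfies $UWU^{\dagger}=U^{2}W=\textnormal{sgn}(w_{0})I_{n_{i}}$ for a suitable $U$, since $\textnormal{sgn}(w_{0})W^{-1}$ has positive constant term and hence a square root in $\mathcal{A}$.

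Finally, the converse inclusion is a direct verification: for any non-singular $U\in\mathcal{A}$ and signs $\sigma_{i}$, the matrix $Q$ in \eqref{eqn:Q} is symmetric (since $\bigoplus_{i}\sigma_{i}E_{i}$ is) and non-singular, and $PQ=QP^{\top}$ follows from $UJ=JU$ together with $J\bigl(\bigoplus_{i}\sigma_{i}E_{i}\bigr)=\bigl(\bigoplus_{i}\sigma_{i}E_{i}\bigr)J^{\top}$. I expect the main obstacle to be the classification step for repeated eigenvalues, where $\mathcal{A}$ is not semisimple: both the lifting through $\textnormal{rad}\,\mathcal{A}$ and the bookkeeping that matches the multiplicities of $\sigma_{i}=+1$ and $\sigma_{i}=-1$ to the Jordan block sizes --- as well as pinning down the correct $E_{i}$ for the real blocks of complex eigenvalues --- need care, and this is where one would lean on the analysis of \cite{uh1974}.
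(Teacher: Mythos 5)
The paper itself offers no proof of Proposition~\ref{prop:S}; it is imported verbatim from \cite{uh1974}, so there is no in-paper argument to compare against, and your attempt has to be judged on its own. Your reduction is correct and is essentially the standard route to this result: identifying the admissible left factors with the nonsingular symmetric solutions of $PQ=QP^\top$ (as the paper notes just before the proposition), conjugating by $V$, and introducing $E=\bigoplus_i E_i$ with $E=E^\top$, $E^2=I$, $EJE=J^\top$, so that $W:=\widetilde{Q}E$ lies in $\mathcal{A}=\textnormal{Cent}(J)$ and symmetry of $\widetilde{Q}$ becomes $W=EW^\top E$. This correctly converts the proposition into the claim that every $\dagger$-self-adjoint unit of $\mathcal{A}$ equals $U\bigl(\bigoplus_i\sigma_iI_{n_i}\bigr)U^\dagger$ for some invertible $U\in\mathcal{A}$, and your converse verification is a routine check, as you say. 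One small correction: no modification of $E_i$ is needed (or allowed, if you want to land exactly on (\ref{eqn:Q})) for the real Jordan blocks of complex-conjugate pairs --- the plain reversal (\ref{eqn:matrix_Ei}) already satisfies $E_iJ_iE_i=J_i^\top$, since rotating the $2\times 2$ rotation-type diagonal blocks by $180^\circ$ produces their transposes and moves the identity blocks to the subdiagonal.

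The genuine gap is that the classification step you reduce to is the entire nontrivial content of the proposition, and you only prove it for a single Jordan block (where $\mathcal{A}$ is commutative, every element is $\dagger$-self-adjoint, and the terminating square-root series settles it). For an eigenvalue with several Jordan blocks, possibly of different sizes, $\mathcal{A}$ is noncommutative and non-semisimple, and your plan --- pass to $\mathcal{A}/\textnormal{rad}\,\mathcal{A}$, use the classification of Hermitian forms over products of matrix algebras over $\mathbb{R}$, $\mathbb{C}$, $\mathbb{H}$, then lift by a Newton/Hensel iteration --- is plausible but not carried out: you do not verify what involution is induced on the semisimple quotient in each case, you do not perform the radical lifting compatibly with $\dagger$ (the square-root trick needs to be run on self-adjoint perturbations while staying inside $\mathcal{A}$), and you do not do the bookkeeping that turns a signature in the quotient into per-block signs $\sigma_i$ attached to Jordan blocks of prescribed sizes (the ``sign characteristic''), which is exactly where the statement could fail if done carelessly. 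You acknowledge this and defer to \cite{uh1974} for precisely that step, so as it stands the proposal is a sound reduction plus a plan, not a self-contained proof of the proposition.
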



In addition to (\ref{condition:P}), condition (\ref{condition:Q=0}) also needs to be satisfied for a system to be symmetrizable, according to Theorem~\ref{lem:symmable}. Therefore, a system is symmetrizable if and only if its system matrix (\ref{eqn:P}) has a \emph{block-diagonal} left factor in a symmetric decomposition, \emph{i.e.}, a factor (\ref{eqn:Q}) that satisfies (\ref{condition:Q=0}).

Based on this result, we provide a simplified condition in Theorem~\ref{thm:KR}, which requires the following definitions for a clear exposition. We let $P$ have $r:=p+q$ distinct real eigenvalues $\lambda_1,\,\lambda_2,\,\dots,\lambda_p,$ and distinct pairs of complex-conjugate eigenvalues $$(\lambda_{p+1},\bar{\lambda}_{p+1}),\,(\lambda_{p+2},\bar{\lambda}_{p+2}),\,\dots,\,(\lambda_{p+q},\bar{\lambda}_{p+q}),$$
regardless of multiplicities. We define
$$
t_j:=\left\lbrace
\begin{array}{ll}
   \textnormal{mul}(\lambda_j), & \lambda_j\in\mathbb{R}\\
   2\,\textnormal{mul}(\lambda_j), & \lambda_j\not\in\mathbb{R}
\end{array}
\right., \quad j=1,2,\dots,r,
$$
and rearrange the Jordan blocks in the real Jordan normal form of $P$ as
$J=V^{-1}PV:=\bigoplus_{j=1}^{r}\hat{J}_j$,
where $\hat{J}_j$ contains all the Jordan blocks with eigenvalue $\lambda_j$ if $\lambda_j\in\mathbb{R}$, and all the Jordan blocks with eigenvalues $\lambda_j$ and $\bar{\lambda}_j$ otherwise. Therefore, the blocks with the same real eigenvalues are next to each other and the blocks with the same pairs of complex-conjugate eigenvalues are also adjacent. We also partition $V$ as
\begin{equation}\label{eqn:V}
    V=\begin{bmatrix}
    W_{n\times (n+m)}\\Z_{m\times (n+m)}
\end{bmatrix},
\end{equation} 
where $W, Z$ are further partitioned column-wise as follows
    \begin{align}\label{eqn:W,Z}
        W&=\begin{bmatrix}
            W_1 & W_2 & \dots & W_{r}
        \end{bmatrix},\quad W_j\in\mathbb{R}^{n\times t_j}\nonumber\\
        Z&=\begin{bmatrix}
            Z_1 & Z_2 & \dots & Z_{r}
        \end{bmatrix},\quad Z_j\in\mathbb{R}^{m\times t_j}.
    \end{align}
\begin{theorem}\label{thm:KR}
     If the system represented by (\ref{eqn:P}) is symmetrizable, then
\begin{equation}\label{eqn:Khatricond}
    \textnormal{Rank}(Z\star W)< t_1^2+\dots+t_r^2.
\end{equation}
\end{theorem}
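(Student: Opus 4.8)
The plan is to combine Theorem~\ref{lem:symmable} with the explicit parametrization of symmetric–decomposition factors in Proposition~\ref{prop:S}, and then vectorize the block condition $Q_{12}=0$ to expose a Khatri--Rao structure.

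First I would invoke Theorem~\ref{lem:symmable}: symmetrizability gives a non-singular $Q=Q^\top$ with $PQ=QP^\top$ and $Q_{12}=0$. As noted just before Proposition~\ref{prop:S}, a non-singular symmetric $Q$ satisfies $PQ=QP^\top$ precisely when it is a left factor in a symmetric decomposition $P=QN$; hence Proposition~\ref{prop:S} applies, and — using the rearranged Jordan transformation $V$ from~(\ref{eqn:V})--(\ref{eqn:W,Z}) — we may write $Q=VU\bigl(\bigoplus_{j=1}^{r}\Sigma_j\bigr)U^\top V^\top$ for some non-singular $U\in\textnormal{Cent}(J)$, where $\Sigma_j$ collects the summands $\sigma_iE_i$ corresponding to the Jordan blocks inside $\hat J_j$ (so $\Sigma_j$ is $t_j\times t_j$). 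Since $\hat J_1,\dots,\hat J_r$ have pairwise disjoint spectra, applying Proposition~\ref{prop:cent} inductively forces $U=\bigoplus_{j=1}^{r}U_j$ with each $U_j\in\textnormal{Cent}(\hat J_j)$ non-singular. Setting $R_j:=U_j\Sigma_j U_j^\top$, a symmetric non-singular $t_j\times t_j$ matrix, gives $Q=V\bigl(\bigoplus_{j=1}^{r}R_j\bigr)V^\top$.

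Next I would extract the off-diagonal block. Partitioning $V=\begin{bmatrix}W\\ Z\end{bmatrix}$ as in~(\ref{eqn:V}) with the column blocks $W_j,Z_j$ of~(\ref{eqn:W,Z}), block multiplication yields $Q_{12}=\sum_{j=1}^{r}W_jR_jZ_j^\top$, so the symmetrizability constraint $Q_{12}=0$ becomes $\sum_{j=1}^{r}W_jR_jZ_j^\top=0$. Vectorizing each term via $\textnormal{vec}(W_jR_jZ_j^\top)=(Z_j\otimes W_j)\,\textnormal{vec}(R_j)$ turns this into $(Z\star W)\,v=0$, where $v:=\bigl[\textnormal{vec}(R_1)^\top\ \cdots\ \textnormal{vec}(R_r)^\top\bigr]^\top$ and $Z\star W$ has exactly $t_1^2+\dots+t_r^2$ columns. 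Finally, because $Q$ and $V$ are non-singular, $\bigoplus_j R_j$ is non-singular, hence every $R_j$ is non-singular and in particular nonzero, so $v\neq 0$. Thus $Z\star W$ has a nontrivial kernel, which gives $\textnormal{Rank}(Z\star W)<t_1^2+\dots+t_r^2$.

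The step I expect to require the most care is the reduction of $\textnormal{Cent}(J)$ to a block-diagonal form matched to the sizes $t_j$: this relies on the deliberate regrouping of Jordan blocks so that blocks with equal real eigenvalue (or equal complex-conjugate pair) are adjacent, which both makes the spectra of the $\hat J_j$ disjoint and makes the $\sigma_iE_i$ summands regroup consistently into the $\Sigma_j$. Once that bookkeeping is in place, the rest is the standard $\textnormal{vec}$/Kronecker identity and the observation that a matrix with a nontrivial kernel cannot have full column rank.
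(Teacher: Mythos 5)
Your proposal is correct and follows essentially the same route as the paper: invoke Theorem~\ref{lem:symmable}, parametrize the non-singular solution via Proposition~\ref{prop:S} as $Q=VXV^\top$, use Proposition~\ref{prop:cent} to force $X$ into a block-diagonal form matched to the grouped Jordan blocks, and vectorize $Q_{12}=0$ into $(Z\star W)x=0$ with $x\neq 0$ because $X$ is non-singular. The only cosmetic difference is that you apply Proposition~\ref{prop:S} directly to the rearranged Jordan form, whereas the paper explicitly argues that permuting Jordan blocks does not alter the set of factors $Q$; both resolve the same bookkeeping point.
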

\begin{proof}
A system represented by the system matrix (\ref{eqn:P}) is symmetrizable if and only if there is a factor (\ref{eqn:Q}) that satisfies (\ref{condition:Q=0}). Let us write this factor as
\begin{equation}\label{eqn:Q=VXV}
Q=VXV^\top,    
\end{equation}
where
\begin{equation}\label{eqn:X}
    X:=U\bigl(\bigoplus_{i=1}^{b}\sigma_iE_i\bigr)U^\top.
\end{equation}
Then equation (\ref{condition:Q=0}) may be written as $[I_n\;\, 0]\,Q\,[0\;\, I_m]^\top=WXZ^\top=0$ which, after vectorizing, takes the form
\begin{equation}\label{eqn:ZkronW}
\left(Z\otimes W \right)\textnormal{vec}(X)=0.
\end{equation}
Next, we simplify the expression (\ref{eqn:X}) for $X$ and plug it in (\ref{eqn:ZkronW}). Note that the permutation of Jordan blocks does not generate new factors $Q$ in (\ref{eqn:Q}). To see this, let $Y=\prod_k \mathcal{E}_k$ be an arbitrary product of elementary block permutation matrices $\mathcal{E}_k$. Then substituting $V$, $U$, $\bigoplus_{i=1}^{b}\sigma_iE_i$ in (\ref{eqn:Q}) by 
$$
V Y, \; Y^{-1}UY, \;
Y^{-1}\bigl(\bigoplus_{i=1}^{b}\sigma_iE_i\bigr)Y
$$
respectively, does not change the factor $Q$. Therefore, without loss of generality, we assume that the Jordan blocks with the same eigenvalues are placed next to each other in $J$ as follows
\begin{equation}\label{eqn:J=Jhat}
    J=\bigoplus_{i=1}^{b}J_i=\bigoplus_{j=1}^{r}\hat{J}_j.
\end{equation}
Since $\hat{J}_j$ have disjoint spectra, one can invoke Proposition~\ref{prop:cent} to write
$$
\textnormal{Cent}(J)=\left\lbrace \bigoplus_{j=1}^{r}\hat{U}_j\,\biggl\vert\biggr.\,
\hat{U}_j\in\textnormal{Cent}\bigl(\hat{J}_j\bigr),\; j=1,2,\dots,r
\right\rbrace.
$$
In a similar way to (\ref{eqn:J=Jhat}), one can regroup the matrices $\sigma_i E_i$ as
$\bigoplus_{i=1}^{b}\sigma_iE_i=\bigoplus_{j=1}^{r}\hat{E}_j$ where $\hat{E}_j$ has the same size as $\hat{J}_j$. Then expression (\ref{eqn:X}) can be written in the following form 
\begin{align}\label{eqn:X=sum}
X&=\bigoplus_{j=1}^{r}\hat{U}_j\bigl(\bigoplus_{j=1}^{r}\hat{E}_j\bigr)
\bigoplus_{j=1}^{r}\hat{U}^\top_j \nonumber\\
&=\bigoplus_{j=1}^{r}\hat{U}_j\hat{E}_j\hat{U}^\top_j\nonumber\\
&:=\bigoplus_{j=1}^{r} X_j.
\end{align}
By plugging in (\ref{eqn:X=sum}), equation (\ref{eqn:ZkronW}) takes the form
\begin{equation}\label{eqn:ZWx=0}
    \left(Z\star W \right)x=0,
\end{equation}
where
\begin{align}\label{eqn:ZstarW}
    \left(Z\star W \right)&=\begin{bmatrix} Z_1\otimes W_1 & \dots & Z_r\otimes W_r\end{bmatrix}\nonumber\\
    x&=[\textnormal{vec}(X_1)^\top,\dots,\textnormal{vec}(X_r)^\top]^\top.
\end{align}
However according to (\ref{eqn:Q=VXV}), $Q$ is non-singular if and only if $X$ is non-singular. Therefore, if equations (\ref{condition:P}), (\ref{condition:Q=0}) have a non-singular solution for $Q$, then the Khatri-Rao product (\ref{eqn:ZstarW}) has a positive nullity. This condition is equivalent to (\ref{eqn:Khatricond}) by the rank-nullity theorem.
\end{proof}

\begin{remark}
The Khatri-Rao product of two matrices is \emph{generically} full rank~\cite{KhatriRank}. Hence according to (\ref{eqn:Khatricond}), random linear systems (with system eigenvectors drawn from continuous probability density functions) are neither symmetric nor symmetrizable with probability one.
\end{remark}


The kernel space of the Khatri-Rao product $Z\star W$ encodes all the non-singular solutions $Q$ of the symmetrizability equations (\ref{condition:P})-(\ref{condition:Q=0}) (cf. equation (\ref{eqn:ZWx=0})). The next corollary gives more insight in the special case where $P$ has $n+m$ real distinct eigenvalues.
\begin{corollary}\label{cor:realdistinct}
     Assume $P$ has real distinct eigenvalues. The system represented by $P$ is symmetrizable if and only if there is some $x\in \textnormal{ker}(Z\star W )$ with non-zero elements. Furthermore, all the possible signatures of the symmetrized system are given by
    $$   i(\Sigma)=\sum_{j=1}^{n+m}\textnormal{sgn}(x_j).
    $$
\end{corollary}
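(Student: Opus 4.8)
The plan is to specialize Theorem~\ref{thm:KR} and its proof to the case where $P$ has $n+m$ distinct real eigenvalues, so that every $t_j=1$, every Jordan block $\hat{J}_j$ is a $1\times 1$ scalar, and each $E_i$ in Proposition~\ref{prop:S} is simply the scalar $1$. First I would observe that in this case $\textnormal{Cent}(\hat{J}_j)=\mathbb{R}\setminus\{0\}$ consists of nonzero scalars $\hat{U}_j=u_j$, so each block $X_j=u_j\,\sigma_j\,u_j=\sigma_j u_j^2$ in (\ref{eqn:X=sum}) is just a nonzero real scalar whose sign is $\sigma_j$; crucially, as $u_j$ ranges over nonzero reals and $\sigma_j\in\{-1,+1\}$, the scalar $X_j$ ranges over \emph{all} nonzero reals. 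Hence the vector $x=[X_1,\dots,X_{n+m}]^\top$ in (\ref{eqn:ZstarW}) ranges over exactly the set of vectors with all entries nonzero, and $X=\mathrm{diag}(x_1,\dots,x_{n+m})$, which is non-singular if and only if $x$ has no zero entry. This is the key structural simplification that makes the necessary condition of Theorem~\ref{thm:KR} also sufficient here.

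Next I would assemble the equivalence. By Theorem~\ref{lem:symmable} and the discussion following Proposition~\ref{prop:S}, the system is symmetrizable iff there is a non-singular $Q$ solving (\ref{condition:P})--(\ref{condition:Q=0}); via $Q=VXV^\top$ this is equivalent to the existence of a non-singular $X$ of the block-diagonal form (\ref{eqn:X=sum}) satisfying (\ref{eqn:ZWx=0}), i.e.\ $(Z\star W)x=0$. By the first paragraph, in the distinct-real-eigenvalue case the admissible $x$ are precisely the vectors with all entries nonzero. Therefore the system is symmetrizable iff $\ker(Z\star W)$ contains a vector with all nonzero elements, which is the first claim.

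For the signature formula, I would use $Q=VXV^\top$ with $V$ non-singular, so by Sylvester's law of inertia $Q$ is congruent to $X=\mathrm{diag}(x_1,\dots,x_{n+m})$ and hence $i(Q)=\sum_{j=1}^{n+m}\textnormal{sgn}(x_j)$. Then I would recall from the proof of Theorem~\ref{lem:symmable} that $Q=L\Sigma L^\top$ with $L$ non-singular, so again by Sylvester's law $i(Q)=i(\Sigma)$. Combining gives $i(\Sigma)=\sum_{j=1}^{n+m}\textnormal{sgn}(x_j)$. To get \emph{all} possible signatures, I would note that every choice of $\sigma_j\in\{-1,+1\}$ and nonzero $u_j$ that keeps $x\in\ker(Z\star W)$ yields a valid symmetrization with the corresponding $\Sigma_i,\Sigma_e$ read off from the signs of the $Q_{11}$- and $Q_{22}$-blocks as in (\ref{eqn:Q_sim_Sigma}); conversely every symmetrization arises this way, so the attainable signatures are exactly $\{\sum_j \textnormal{sgn}(x_j) : x\in\ker(Z\star W),\ x_j\neq 0\ \forall j\}$.

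The main obstacle I anticipate is being careful about the ``regrouping'' step: with distinct eigenvalues one must confirm that no further freedom is hidden in the choice of real Jordan form or in $\textnormal{Cent}(J)$ beyond the nonzero scalars $u_j$ — this is exactly where Proposition~\ref{prop:cent} and the permutation-invariance argument already used in the proof of Theorem~\ref{thm:KR} are invoked, so it should be routine. A minor subtlety is that if any complex eigenvalues were present the $1\times 1$ reduction would fail; the hypothesis ``real distinct'' is precisely what rules this out, and I would state explicitly that $r=n+m$ and $t_j=1$ for all $j$ so that $t_1^2+\dots+t_r^2=n+m$ and the rank bound in (\ref{eqn:Khatricond}) reads $\textnormal{Rank}(Z\star W)<n+m$, i.e.\ $\ker(Z\star W)\neq\{0\}$ — though for sufficiency one needs the sharper statement that the kernel meets the open dense set of all-nonzero vectors, not merely that it is nontrivial.
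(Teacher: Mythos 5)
Your proposal is correct and follows essentially the same route as the paper: specializing Proposition~\ref{prop:S} to the case of distinct real eigenvalues so that $X$ becomes an arbitrary non-singular diagonal matrix, reducing (\ref{condition:Q=0}) to $(Z\star W)x=0$ with all $x_j\neq 0$, and reading off the signature via Sylvester's law of inertia from $Q=VXV^\top=L\Sigma L^\top$. Your write-up simply spells out details (the range of $X_j=\sigma_j u_j^2$, the congruence chain for $i(\Sigma)$) that the paper's terse proof leaves implicit.
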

\begin{proof}
    When $P$ has real distinct eigenvalues the matrices $E_i$ (\ref{eqn:matrix_Ei}) equal unity, $J$ is diagonal and $\textnormal{Cent}(J)$ is the set of all matrices that are simultaneously diagonalizable with $J$, that is all diagonal matrices. Therefore, all the solutions of (\ref{condition:P})-(\ref{condition:Q=0}) can be written as (\ref{eqn:Q=VXV}) where $X$ is a non-singular diagonal matrix, whose diagonal elements are given by $x$ in (\ref{eqn:ZstarW}).
\end{proof} 


When $P$ has distinct real eigenvalues, the matrix $Q$ is singular if and only if $x_j=0$ holds for some $j=1,2,\dots,m+n$, according to (\ref{eqn:Q=VXV}). In this case, Corollary~\ref{cor:realdistinct} makes it easy to isolate the singularities that make the conditions of Theorem~\ref{lem:symmable} non-convex and verify symmetrizability using linear feasibility programs. In particular, the system represented by $P$ is symmetrizable with signature $i$, if and only if the following linear program is feasible for one $e\in\lbrace -1,+1\rbrace^{n+m}$ where $\sum_{j=1}^{n+m}e_j=i$:
$$
    \begin{array}[c]{rl}
    \text{find} & x\in \mathbb{R}^{n+m}\\
    \mbox{subject to}
    & (Z\star W)\, x=0 \\
    & e_j x_j>0,\quad j=1,2,\dots,m+n.
    \end{array}
$$

\section{Examples}\label{sec:ex}
In this section, we show how symmetrizability extends symmetry in a physical system (Example~\ref{ex:tank}), check whether a given system is symmetrizable, determine the possible system signatures, and compute symmetrizing gains (Example~\ref{ex:numerical}) and extend the analytic solution of an optimal control problem for symmetric systems to symmetrizable systems (Example~\ref{ex:optimal}).
\subsection{A classical multi-tank system}\label{ex:tank}
Consider a quadruple-tank process with two inputs (voltages applied to the pumps) and two outputs (levels of the lower tanks)~\cite{nunes1998modeling}. This process can be described by the transfer function
\begin{equation}\label{eqn:ex:G}
G(s)=\begin{bmatrix}\frac{c_{11}}{1+sT_1} &\frac{c_{12}}{(1+sT_1)(1+sT_3)}\\
\frac{c_{21}}{(1+sT_2)(1+sT_4)} & \frac{c_{22}}{1+sT_2}\end{bmatrix},
\end{equation}
where
\begin{align}\label{eqn:ex:par}
    c_{11}&=\gamma_1 k_1 T_1k_c/A_1, \nonumber\\
    c_{12}&=(1-\gamma_2)k_2T_1k_c/A_1\neq 0, \nonumber\\
    c_{21}&=(1-\gamma_1)k_1T_2k_c/A_2\neq 0, \nonumber\\
    c_{22}&=\gamma_2 k_2T_2k_c/A_2.
\end{align}
In (\ref{eqn:ex:G})-(\ref{eqn:ex:par}), $A_i$ is the cross-section of the $i$th tank, $T_i$ is the time constant associated with the $i$th tank, constants $k_i>0$ and $\gamma_i\in[0,1]$ are determined by the $i$th pump and valve settings and $k_c>0$. From definition (\ref{eqn:exsym}), this system is symmetric if and only if
\begin{align}
    T_1&=T_2,T_3=T_4 \;\textnormal{or}\; T_1=T_4,T_2=T_3 \label{ex:cond1}\\
    c_{12}&=\pm \,c_{21}. \label{ex:cond2}
\end{align}
However, the first condition (\ref{ex:cond1}) is sufficient for the system to be symmetrizable. Hence symmetrizability imposes a less restrictive condition on the tank parameters than symmetry.

\subsection{A numerical example}\label{ex:numerical}
Consider a system described by the minimal state-space equations (\ref{eqn:state_space}) where
\begin{align*}
    A&=\begin{bmatrix}
    3.6000 &  -0.6333\\
   -0.3000 &   3.4000\end{bmatrix},\\   
   B&=\begin{bmatrix}
-2.1400 &  1.3200 &   3.6400\\
-1.8600 &  2.2800 &   9.9600
\end{bmatrix},\\
C&=\begin{bmatrix}
    0.0500  & -0.3833\\
    2.2250  & -1.9083 \\
   -0.5000  &  0.4667
\end{bmatrix},\\
D&=\begin{bmatrix}
    2.2000  & -1.8000  & -8.4000 \\
   -2.4000  & -4.4000 & -37.2000\\
    0.4000  &  1.4000 &  10.2000
\end{bmatrix}.
\end{align*}
It can be verified that this system is not symmetric, by calculating its transfer function and checking the condition (\ref{eqn:exsym}). To see whether it is symmetrizable, we calculate the eigenvectors (\ref{eqn:V}) of the system matrix $P$ (\ref{eqn:P}) and obtain
$$
\textnormal{ker}(Z\star W )=\textnormal{col}\Biggl(\begin{bmatrix}
1&0&0&0&0\\
0&1&1&1&1
\end{bmatrix}^\top\Biggr).
$$
Matrix $P$ has $n+m=5$ real distinct eigenvalues. Therefore, according to Corollary~\ref{cor:realdistinct}, this system is symmetrizable with the signatures $i(\Sigma)\in\lbrace \pm 5,\pm 3\rbrace$. Next, we demonstrate how the symmetrizing gain $K$ is computed to realize a desired signature. Let us choose $i(\Sigma)=-3$ and take a point of the kernel space $\textnormal{ker}(Z\star W )$ with the corresponding sign, \emph{e.g.},
$$
x=\begin{bmatrix}1& -1& -1& -1& -1\end{bmatrix}^\top.
$$
Then $Q$ is calculated as (\ref{eqn:Q=VXV}) and the gain $K$ is obtained by solving (\ref{condition:Q=K}), \emph{e.g.} through (\ref{eqn:1sol}) as
$$
K=\begin{bmatrix}
2.3460 & -0.0576 & -2.7399 \\
-3.0744 &  0.0637 & -2.1109 \\
0.9380 & 0.3529 & -0.0658
\end{bmatrix}.
$$
One can verify that $H(s)=K^{-1}G(s)K$ is symmetric with $i(\Sigma_e)=1$.

\subsection{An optimal control problem}\label{ex:optimal}
Consider the linear system described by the minimal state-space equations
\begin{align}\label{eqn:state_space_ex}
\begin{array}{rcl}
    \dot{x}(t)&=&Ax(t)+Bu(t)+w(t) \\
    y(t)&=&Cx(t)+Du(t) 
\end{array},&\quad t\geq 0
\end{align}
where $x(0)=0$ and $w(t)\in\mathbb{R}^n$ is the disturbance input. We would like to find an output-feedback linear controller that stabilizes the system and minimizes the performance measure
$$
\mathcal{J}(R,S)=\sup_{w\in\mathcal{W}(S)}\int_{0}^{\infty} y(t)^\top R y(t)+\alpha^2 u(t)^\top R u(t) dt
$$
where $\alpha>0$ balances the control effort versus output regulation in the objective function and
$$
\mathcal{W}(S)=\left\lbrace w \,\Big\vert\, \int_{0}^{\infty} w^\top(t)Sw(t)dt\leq 1\right\rbrace.
$$
The constants $R,S\succ 0$ are determined based on the dynamics (\ref{eqn:state_space_ex}) in a way explained shortly. This problem was introduced and solved for relaxation systems in \cite{pates2019optimal}, for which $R=I$. A system is of relaxation type, if and only if it admits a completely symmetric realization (one with $\Sigma=I$) and the eigenvalues of $A$ and $-D$ (in any of its minimal realizations) are on the closed left-half plane. We extend this result to \emph{symmetrizable} systems with $\Sigma=I$ in which the eigenvalues of $A$ and $-D$ are located on the closed left-half plane. These systems include relaxation systems as a proper subset and allow for $R\succ 0$ as any positive definite matrix.

Assume that system (\ref{eqn:state_space_ex}) is symmetrizable with $\Sigma=I$. Then the equations (\ref{condition:P})-(\ref{condition:Q=0}) have a solution $Q\succ 0$ which can be used in (\ref{condition:Q=T})-(\ref{condition:Q=K}) to find a similarity transform matrix $T$ and a symmetrizing gain $K$ as follows
\begin{align*}
    T=Q_{11}^{1/2},\;
    K=Q_{22}^{1/2}.
\end{align*}
If the eigenvalues of $A$ and $-D$ are located on the closed left-half plane, equations (\ref{condition:P})-(\ref{condition:Q=0}) can be written as
\begin{align*}
    &A^\top T^{-2}=T^{-2} A\preceq 0,\;
    (K^{-1}C)^\top= T^{-2} BK,\\
    &K^{-1}DK\succeq 0,
\end{align*}
which certifies that the symmetrized system (\ref{eqn:symmable}) with the following state space is of relaxation type
\begin{align}\label{eqn:state_space_s}
    \dot{\hat{x}}(t)&=T^{-1}AT\hat{x}(t)+T^{-1}BK u_s(t)+w_s(t)\nonumber\\
    y_s(t)&=K^{-1}CT\hat{x}(t)+K^{-1}DK u_s(t),
\end{align}
where $y_s(t)=K^{-1}y(t)$, $u_s(t)=K^{-1}u(t)$, and $w_s=T^{-1}w(t)$. Choosing 
\begin{align*}
 R=K^{-2},\;
 S=T^{-2}
\end{align*}
sets the optimal control problem in the recognized form of \cite{pates2019optimal}, in terms of the relaxation system (\ref{eqn:state_space_s}), its inputs $u_s$, $w_s$ and its output $y_s$. Based on the results of \cite{pates2019optimal}, the performance measure $\mathcal{J}(K^{-2},T^{-2})$ is minimized under the relaxation dynamics (\ref{eqn:state_space_s}) with a static output-feedback controller with the closed-form expression
$$
u_s(t)=
-\alpha^{-1}(K^{-1}DK-K^{-1}CA^{-1} BK)
y_s(t).
$$ 
Therefore, the optimal feedback controller from $y$ to $u$ is given by
$$
u(t)=
-\alpha^{-1}(D-CA^{-1}B)y(t).
$$

\section{Conclusion}\label{sec:conc}
We studied the problem of transforming a linear system into a symmetric system using a static gain. We have derived the conditions for this transformation to exist and provided a method to obtain both the symmetrizing gains and symmetric realizations with different signatures.

Symmetrizability is a weaker condition than symmetry. Yet, it allows one to apply the simplifying properties of symmetry when controlling symmetrizable systems. To demonstrate this point, we revisited an optimal control problem and extended its analytic solution for symmetric systems to symmetrizable systems.

Finally, we have shown that linear systems are generically not symmetrizable. Therefore, a future research direction would be to approximate systems by the ``nearest'' symmetrizable systems.

\printbibliography

@article{uh1974,
  title={On the nonsingular symmetric factors of a real matrix},
  author={Uhlig, Frank},
  journal={Linear Algebra and its Applications},
  volume={8},
  number={4},
  pages={351--354},
  year={1974},
  publisher={Elsevier}
}

@article{willems1976sym,
  title={Realization of systems with internal passivity and symmetry constraints},
  author={Willems, Jan C},
  journal={Journal of the Franklin Institute},
  volume={301},
  number={6},
  pages={605--621},
  year={1976},
  publisher={Elsevier}
}

@article{KhatriRank,
  title={Decompositions of a higher-order tensor in block terms—Part I: Lemmas for partitioned matrices},
  author={De Lathauwer, Lieven},
  journal={SIAM Journal on Matrix Analysis and Applications},
  volume={30},
  number={3},
  pages={1022--1032},
  year={2008},
  publisher={SIAM}
}

@article{nunes1998modeling,
  title={Modeling and control of the quadruple-tank process},
  author={Nunes, Jos{\'e} Lu{\'\i}s},
  journal={MSc Theses},
  issn= {{0280-5316}},
  year={1998}
}

@article{taussky,
  title={The role of symmetric matrices in the study of general matrices},
  author={Taussky, Olga},
  journal={Linear Algebra and its Applications},
  volume={5},
  number={2},
  pages={147--154},
  year={1972},
  publisher={North-Holland}
}

@book{mohammadpour2010efficient,
  title={Efficient modeling and control of large-scale systems},
  author={Mohammadpour, Javad and Grigoriadis, Karolos M},
  year={2010},
  publisher={Springer Science \& Business Media}
}

@article{pates2022passive,
  title={Passive and reciprocal networks: From simple models to simple optimal controllers},
  author={Pates, Richard},
  journal={IEEE Control Systems Magazine},
  volume={42},
  number={3},
  pages={73--92},
  year={2022},
  publisher={IEEE}
}

@article{taghavian2024optimal,
  title={Optimal control of continuous-time symmetric systems with unknown dynamics and noisy measurements},
  author={Taghavian, Hamed and Dorfler, Florian and Johansson, Mikael},
  journal={arXiv preprint arXiv:2403.13605},
  year={2024}
}

@inproceedings{pates2019optimal,
  title={On the optimal control of relaxation systems},
  author={Pates, Richard and Bergeling, Carolina and Rantzer, Anders},
  booktitle={58th conference on decision and control (CDC)},
  pages={6068--6073},
  year={2019},
  organization={IEEE}
}

@misc{LMU,
  title={Math 504: Advanced Linear Algebra},
  author={Woerdeman, Hugo and Grinberg, Darij},
  year={2022},
}

@inproceedings{drummond2024learning,
  title={Learning-to-control relaxation systems with the step response},
  author={Drummond, Ross and Taghavian, Hamed and Baldivieso-Monasterios, Pablo R},
  booktitle={2024 American Control Conference (ACC)},
  pages={3584--3589},
  year={2024},
  organization={IEEE}
}

@article{Hughes,
  title={On reciprocal systems and controllability},
  author={Hughes, Timothy H},
  journal={Automatica},
  volume={101},
  pages={396--408},
  year={2019},
  publisher={Elsevier}
}
\end{document}